\documentclass[9pt]{article}%

\usepackage{subfigure}
\usepackage{algorithm}
\usepackage{epsfig}
\usepackage{epstopdf}
\usepackage{graphics,graphicx,amssymb,amsmath,verbatim}
\usepackage{amssymb}
\usepackage{mathrsfs}
\usepackage{amsfonts}
\usepackage{amsmath}
\usepackage{graphicx}
\usepackage{color}
\usepackage{mathrsfs}
\usepackage{makecell}%
\providecommand{\U}[1]{\protect \rule{.1in}{.1in}}
\newtheorem{theorem}{Theorem}

\newtheorem{definition}{Definition}
\newtheorem{example}{Example}

\newtheorem{lemma}{Lemma}

\newtheorem{remark}{Remark}

\newenvironment{proof}[1][Proof]{\noindent \textbf{#1.} }{\  \rule{0.5em}{0.5em}}
\definecolor{blue}{rgb}{0,0,1}
\allowdisplaybreaks[4]
\begin{document}
\title{Exact Prescribed-Time Control Based on Simple Harmonic Motion: Stability Analysis and Nonsingular Sliding Mode Stabilization}

\author{Yi Ding and Bin Zhou,  \thanks{This work was supported
in part by the National Natural Science Foundation of China for Distinguished
Young Scholars under Grant 62125303,  the National Natural Science Foundation of China - ``Qisun Ye" Science Foundation under Grant U2441243, National Natural Science Foundation of China under Grant 62521005, Fundamental and Interdisciplinary Disciplines Breakthrough Plan of the Ministry of Education of China under Grant JYB2025XDXM206 and the Science Center Program of National
Natural Science Foundation of China under Grant 62188101.
\textit{(Corresponding author: Bin Zhou)}} \thanks{Yi Ding and Bin Zhou  are with the Center for Control Theory and Guidance
Technology, Harbin Institute of Technology, Harbin, 150001, China. (Email:
binzhoulee@163.com; binzhou@hit.edu.cn.)}}
\date{}
\maketitle

\begin{abstract}
This paper investigates the problems of stability analysis and nonsingular sliding mode stabilization for exact prescribed-time control. By exploiting the isochronism of simple harmonic motion, this paper establishes a novel exact prescribed-time control framework. First, a novel Lyapunov analysis method for exact prescribed-time control is proposed, based on which the exact prescribed-time stabilization of scalar systems is achieved. It is theoretically established that for arbitrary non-zero initial conditions, the settling time is exactly equal to the prescribed time. Next, the framework is extended to a novel sliding mode control law. It guarantees exact prescribed-time convergence for almost all initial conditions even in the presence of external disturbances. Notably, the proposed control law is nonsingular across the entire state space and maintains uniformly bounded gains. Finally, simulation results validate the effectiveness of the proposed methods.

\textbf{Keywords: }Prescribed-time control;
Sliding mode control; Simple
harmonic motion (SHM).

\end{abstract}


\section{Introduction}
The earliest form of finite-time stability appeared in the 1960s under the name ``short-time stability" \cite{Dorato61}. The finite-time Lyapunov stability theory and finite-time homogeneity theory were formulated in \cite{Bhat97con} and \cite{Bhat00siam}, respectively. Owing to its potential for fast convergence, high precision, and improved robustness compared with asymptotic stability, finite-time stability has attracted sustained attention, leading to a wide range of results (see, for example, \cite{Cortes06auto,Efimov15tac,Franceschelli14tac,Franceschelli16tac,LiH23tac,Perruquetti08tac,Ortega20tac,Orlov05siam} and \cite{Zhao20tac} and the references therein).

Although the aforementioned works achieve finite-time stabilization for different types of systems, the corresponding settling time remains dependent on the initial conditions. This coupling leads to two issues: (i) the initial condition may be unavailable in advance, making it difficult to estimate the settling-time; and (ii) even if the initial state is known, the corresponding settling time can grow without bound as the initial condition moves farther away from the equilibrium.

Fixed-time stability, proposed in \cite{Z82tac}, addresses the above issues by guaranteeing a constant upper bound on the settling time that does not depend on initial conditions. Representative advances include homogeneity-based analysis/design \cite{Polyakov23,Tian17auto}, sliding-mode approaches \cite{Corradini18auto,Moulay21tac,Zuo15iet}, parameter estimation \cite{Wang20ejc} and event-triggered control \cite{Lei22tac}.  Nevertheless, while the upper bound of the settling time is initial-condition independent, the actual settling time of the closed-loop system may still vary with the initial condition.

Compared with finite-time and fixed-time control, prescribed-time control offers two notable advantages: (i) the actual settling time is independent of the initial conditions, and (ii) the settling time can be preassigned to an arbitrary positive constant. In \cite{Song17auto}, the concept of prescribed-time control was first established, and a time-varying high-gain feedback (THF) control scheme was designed to achieve the prescribed-time stabilization of single-input nonlinear systems. Based on THF, a large number of interesting and important works have been generated (see, for example, \cite{Gong20tnse,Holloway19auto,Li22tac,Li21tac,Li23siam,Kan17dmc,Krishnamurthy20auto,Ye25auto,Yucelen18tac,Zhou26auto,Zhang24auto}). However, a shared limitation of these methods is that the gain becomes unbounded, which introduces a singularity problem and makes the control law undefined at the prescribed time and thereafter. To address this issue, some interesting results were introduced in \cite{Orlov22auto}, providing bounded-gain methods. Nevertheless, for these bounded-gain methods, the actual settling time still varies with the initial conditions.

In addition to the THF, a novel prescribed-time control approach utilizing periodic gains and artificial delays, termed periodic delayed feedback (PDF), was originally proposed in \cite{zhou21tac}. By using uniformly bounded time-varying periodic gains and introducing an artificial time-delay term in the control law, the PDF can achieve a control performance similar to that of the THF. Building upon the ideas in \cite{zhou21tac}, several interesting results have emerged in recent years (see, for example, \cite{Ding23tac,Ding26tac} and \cite{Espitia26tac}). As the PDF is designed without relying to high gains, it is not subject to the singularity problems that typically arise in the THF framework. However, these results rely on time-varying periodic gains, which introduces extra offline computation.

SHM is one of the most fundamental and representative forms of oscillatory motion in classical mechanics. Research on SHM can be traced back to the early 17th century, when Galileo Galilei examined the isochronous behavior of pendulums \cite{Galilei14}. Leonhard Euler and Daniel Bernoulli utilized calculus to formulate simple harmonic motion, establishing it as a core component of analytical mechanics \cite{Euler}. Nowadays, the primary identity of SHM is no longer an ``object of study" in itself, but rather a fundamental unit derived from the local linearization or modal decomposition of numerous complex systems.

Particularly relevant to this paper, SHM exhibits isochronicity, meaning that the oscillation period is independent of the vibration amplitude. In ideal simple harmonic motion (e.g., a spring-mass system), the oscillation period $T$ is determined solely by the intrinsic properties of the system (such as mass $M$ and the spring constant $k$) and is independent of its initial conditions (a more comprehensive treatment is provided in Subsection \ref{intro_shm}). This isochronous property is remarkably similar to the control objectives of prescribed-time control, where the settling time remains independent of the initial conditions.

Motivated by the aforementioned observations, this paper aims to propose a novel exact prescribed-time control framework, outside of THF and PDF, by leveraging the isochronism of SHM. The main contributions of this paper are twofold:
\begin{enumerate}
\item This paper proposes a SHM based Lyapunov analysis method, and achieves exact prescribed-time stabilization for scalar systems.  It is guaranteed that for any  non-zero initial condition, the actual settling time of the closed-loop system exactly matches the prescribed time.

\item This paper proposes a novel exact prescribed-time nonsingular sliding mode control law, with the following appealing properties: (i)
    it guarantees that for almost all initial conditions, the actual settling time of the closed-loop system exactly matches the prescribed time, even under external disturbances, (ii) the proposed control law is clearly defined throughout the entire state space, and (iii) since the gains are uniformly bounded, the control law is well-defined for all $t\in [0,\infty)$.
\end{enumerate}

The remainder of this paper is organized as follows. Section II introduces some necessary preliminaries regarding stability definitions and the isochronicity of simple harmonic motion. Section III presents the core idea of this paper, establishing a novel SHM-like nonlinear system and a new Lyapunov-based analysis method, which is then applied to achieve exact prescribed-time control for scalar systems. In Section IV, an exact prescribed-time nonsingular sliding mode control law is proposed and its effectiveness is validated through a simulation example. Finally, Section V concludes the paper.

\section{Preliminary}
\subsection{Notation}

For $z\in \mathbf{R}$ and $\alpha\in (0,\infty)$, we denote $\mathrm{sig}^{\alpha}(z)=\vert z\vert^{\alpha}\mathrm{sign}(z)$, where $\mathrm{sign}(\cdot)$ denotes the sign function.  For $x\in \mathbf{R}$ and $y\in \mathbf{R}$, we denote
\begin{align*}
\mathrm{atan2}(y,x)=
\left\{
\begin{array}
[c]{ll}
\arctan\left(\frac{y}{x}\right), & x>0,\\
\arctan\left(\frac{y}{x}\right)+\pi, & x<0,y\geq 0,\\
\arctan\left(\frac{y}{x}\right)-\pi, & x<0,y<0,\\
\frac{\pi}{2}, & x=0,y>0,\\
-\frac{\pi}{2}, & x=0,y<0,\\
0, & x=0,y=0.
\end{array}
\right.
\end{align*}

\subsection{Finite-Time, Fixed-Time and Exact Fixed-Time Stability}
Consider the following nonlinear system:
\begin{equation}
\dot{x}(t)=f(t,x(t),\varepsilon(t)),\quad x(0)=x_{0},\quad t\geq 0,  \label{nonlinear_system}%
\end{equation}
where $x(t)\in \mathbf{R}^{n}$ is the state,
$f:[0,\infty)\times\mathbf{R}^{n}\times \mathbf{R}^{m}\rightarrow
\mathbf{R}^{n}$ is locally essentially bounded and locally
measurable, and the variable $\varepsilon(t)\in \mathbf{R}^{m}$ can be described as
\begin{equation}
\dot{\varepsilon}(t)=g(t,x(t)),\quad \varepsilon(0)=\varepsilon_{0},\quad t\geq 0, \label{non_sys2}
\end{equation}
where $g:[0,\infty)\times\mathbf{R}^{n}\rightarrow
\mathbf{R}^{m}$ is locally essentially bounded and locally
measurable.
\begin{remark}
Denote $\xi(t)=[x^{\mathrm{T}}(t),\varepsilon^{\mathrm{T}}(t)]^{\mathrm{T}}\in \mathbf{R}^{n+m}$, then (\ref{nonlinear_system}) and (\ref{non_sys2}) can be rewritten as
\begin{equation}
\dot{\xi}(t)=h(t,\xi(t))\triangleq \left[
\begin{array}
[c]{c}
f(t,x(t),\varepsilon(t))\\
g(x(t))
\end{array}
\right],\quad t\geq 0. \label{sys_sum}
\end{equation}
The function $\xi(t)$ defined on a non-degenerate interval $\mathbf{I} \subseteq [0,\infty)$ is called a Filippov solution \cite{Filippov88,Cortes08csm} for system (\ref{sys_sum}), if it is absolutely continuous on any subinterval $[t_{1},t_{2}]$ of $\mathbf{I}$, and for almost everywhere $t\in \mathbf{I}$,
\[
\dot{\xi}(t)\!\in\! H(t,\xi(t))\!\triangleq\! \bigcap_{\rho>0} \bigcap_{\mu(\mathbf{N})=0} \bar{\mathrm{co}}[h(t,\mathcal{B}(\xi(t),\rho)\backslash\mathbf{N})],
\]
where $\mu(\mathbf{N})$ is the Lebesgue measure of set $\mathbf{N}$, intersection is taken over all sets $\mathbf{N}$ of measure zero and over all $\rho>0$, $\mathcal{B}(\xi(t),\rho)$ is the ball with the center at $\xi(t)\in \mathbf{R}^{n+m}$ and the radius $\rho>0$, and $\bar{\mathrm{co}}[\mathbf{E}]$ is the closure of the convex hull of some set $\mathbf{E}$.
\end{remark}

It should be noted that all closed-loop systems discussed in this paper take the form of (\ref{sys_sum}). $x(t)$ is the state of primary concern, while $\varepsilon(t)$ appears as a controller parameter and serves as an extended state artificially introduced for control purposes. Therefore, this paper only concerns the stability of system (\ref{nonlinear_system}). Furthermore, as can be seen from (\ref{non_sys2}), for a given initial value $\varepsilon_{0}$, $\varepsilon(t)$ is actually determined only by the current and historical information of state $x(t)$.

Let the origin $x(t)=0$ be an equilibrium of system (\ref{nonlinear_system}). The system (\ref{nonlinear_system}) may have non-unique solutions and may admit both weak and strong stability (see \cite{Filippov88}). This paper deals only with the strong stability, which asks for stable behavior of all solutions of the system~(\ref{nonlinear_system}).

\begin{definition}\cite{Bhat00siam,Orlov05siam}
System (\ref{nonlinear_system}) is finite time stable (FTS) if it
is globally stable and there exists a function $T_{0}:\mathbf{R}^{n}\rightarrow [0,\infty)$, such that $x(t)=0,\, \forall t\geq T_{0}(x_{0})$.
\end{definition}
\begin{definition}
\label{def2} \cite{Z82tac} System (\ref{nonlinear_system}) is fixed-time stable (FxTS) with the settling time $T<\infty$ if it is FTS and $T_{0}(x_{0})\leq T$.
\end{definition}
\begin{definition}
\label{def3} \cite{Ding23tac} System (\ref{nonlinear_system}) is exact fixed-time stable (EFxTS) with the settling time $T<\infty$ if it is FxTS and $T_{0}(x_{0})= T$ for all $x_{0}\neq 0$.
\end{definition}

\subsection{Isochronicity of SHM}\label{intro_shm}
SHM is an oscillatory motion in which the restoring force (or restoring torque) is directly proportional to the displacement from equilibrium and acts in the opposite direction.

Mathematically, a motion is SHM if its displacement $x(t)\in \mathbf{R}$ satisfies
\begin{equation}
\ddot{x}(t)+\omega^2 x(t)=0,\quad x(0)=x_{0},\quad \dot{x}(0)=v_{0},\label{shm_sys}
\end{equation}
where $\omega>0$ is a constant. Let $x_{1}(t)= x(t)$ and $x_{2}(t)=-\dot{x}(t)/\omega$. Then the system can be written in the state-space form:
\begin{align}
\left[
\begin{array}
[c]{c}
\dot{x}_{1}(t)\\
\dot{x}_{2}(t)
\end{array}
\right]=\left[
\begin{array}
[c]{cc}
0 & -\omega\\
\omega & 0
\end{array}
\right]\left[
\begin{array}
[c]{c}
x_{1}(t)\\
x_{2}(t)
\end{array}
\right],\label{shm_state}
\end{align}
whose solution is
\begin{equation}
x_{1}(t)= r\cos(\omega t+\phi),\quad x_{2}(t)=r\sin(\omega t+\phi),\label{shm_x}
\end{equation}
where $
r=\sqrt{x_{1}^{2}(0)+x_{2}^{2}(0)}$ and $ \phi=\mathrm{atan2}\left(x_{2}(0),x_{1}(0)\right)$.

Next, we consider the special case where $v_{0}=0$. Then, (\ref{shm_x}) becomes
\[
x_{1}(t)=  x_{0}\cos(\omega t),\quad x_{2}(t)=x_{0}\sin(\omega t).
\]
Thus, for any $x_{0}\in \mathbf{R}$, it always holds that $x(\pi/(2\omega))=0$. In addition, for $x_{0}\neq 0$, we know that $x(t)\neq 0,\forall t\in [0,\pi/(2\omega))$. An example is provided below to demonstrate this:
\begin{example}
Consider a mass $M$ attached to a spring with spring constant $k$ on a frictionless surface (see Fig. \ref{shm}).
\begin{figure}[h]
\centering
\includegraphics[scale=0.9]{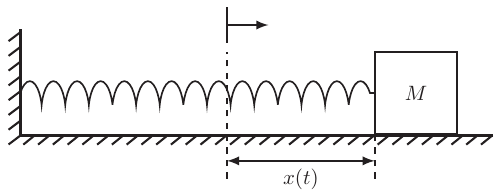}\caption{Ideal mass-spring oscillator}%
\label{shm}%
\end{figure}
Suppose the mass is pulled away from the equilibrium position and then released from rest at a displacement $x(0)=x_{0}$. It is straightforward to see that the force acting on the mass is $-kx(t)$. Therefore, the system dynamics can be written as:
\begin{equation}
\ddot{x}(t)+\frac{k}{M}x(t)=0,\quad x(0)=x_{0},\quad \dot{x}(0)=0. \label{shm_exp}
\end{equation}
It follows that
\[
x(t)=x_{0}\cos\left(\sqrt{\frac{k}{M}}t\right).
\]
For simulation, the spring constant $k$ is set to $k=1\mathrm{N/m}$ and the mass $M$ is set to $M=4/\pi^2\mathrm{kg}$. Fig. \ref{shm_sim} shows the evolution of the mass displacement under different initial conditions.
\begin{figure}[h]
\centering
\includegraphics[scale=0.9]{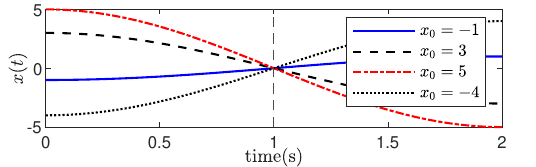}\caption{Displacement $x(t)$ of system (\ref{shm_exp}) under different initial values}%
\label{shm_sim}%
\end{figure}
\end{example}

From the simulation results, it can be seen that for different nonzero initial conditions, the displacement $x(t)$ always reaches $x(t)=0$ at the same time, which is very similar to the performance required by exact prescribed-time control. In fact, if we only consider $t\in [0,1]$, this is almost the same behavior as that targeted by exact prescribed-time control. Unfortunately, since $\dot{x}(t)\neq 0$ at $x(t)=0$, the state $x(t)$ cannot remain for $t\in [1,\infty)$. Hence, the system is not EFxTS. The remainder of this paper aims to remedy this limitation by proposing a novel exact prescribed-time control approach.

\section{Core Idea}
\subsection{A SHM-Like Nonlinear System}
Consider a nonlinear system in a SHM-like form:
\begin{align}
\left\{
\begin{aligned}
\dot{z}_{1}(t)= & -\frac{\omega}{\alpha}  z_{2}(t) \mathrm{sig}^{1-\alpha}(z_{1}(t)) ,\\
\dot{z}_{2}(t)= & \omega \vert z_{1}(t)\vert^{\alpha},
\end{aligned}
\right. \label{core}
\end{align}
where $\omega >0$ and $\alpha\in (0,1)$ are two constants.
\begin{lemma}\label{lem1}
The state of the system (\ref{core}) satisfies $z_{1}(t)=0,\forall t\geq \pi/\omega$. In addition, if $z_{1}(0)\neq 0$ and $z_{2}(0)=0$, then $z_{1}(t)=0,\forall t\geq \pi/(2\omega)$ and $z_{1}(t)\neq 0,\forall t\in [0,\pi/(2\omega))$.
\end{lemma}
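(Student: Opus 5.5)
The plan is to change variables so that \eqref{core} becomes exactly the harmonic oscillator \eqref{shm_state} on the region $z_{1}\neq 0$, and then to use the isochronicity of Subsection~\ref{intro_shm}. Set $w(t)=\vert z_{1}(t)\vert^{\alpha}\geq 0$. On any interval where $z_{1}(t)\neq 0$, the map $z_{1}\mapsto\vert z_{1}\vert^{\alpha}$ is smooth, and the chain rule gives
\[
\dot w=\alpha\vert z_{1}\vert^{\alpha-1}\mathrm{sign}(z_{1})\dot z_{1}
=-\omega z_{2}\vert z_{1}\vert^{\alpha-1}\vert z_{1}\vert^{1-\alpha}\mathrm{sign}^{2}(z_{1})=-\omega z_{2},
\qquad \dot z_{2}=\omega w .
\]
So on such intervals $(w,z_{2})$ obeys \eqref{shm_state} with $x_{1}=w$ and $x_{2}=z_{2}$. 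By \eqref{shm_x}, $w(t)=r\cos(\omega t+\phi)$ and $z_{2}(t)=r\sin(\omega t+\phi)$, where $r=\sqrt{w^{2}(0)+z_{2}^{2}(0)}$ and $\phi=\mathrm{atan2}(z_{2}(0),w(0))$. Since the right-hand side of \eqref{core} is continuous (because $\alpha<1$), Filippov solutions are classical. Moreover $z_{1}$ keeps its sign until it first vanishes, so the formula for $w$ holds up to the first zero of $z_{1}$.

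\emph{Step 1 (reaching zero).} Suppose $z_{1}(0)\neq 0$. Then $w(0)>0$, so $\phi\in(-\pi/2,\pi/2)$. The function $r\cos(\omega t+\phi)$ is positive on $[0,t^{*})$ and vanishes at $t^{*}=(\pi/2-\phi)/\omega\in(0,\pi/\omega)$, so $z_{1}$ first reaches $0$ at $t^{*}<\pi/\omega$. If in addition $z_{2}(0)=0$, then $\phi=0$ and $t^{*}=\pi/(2\omega)$. This gives $z_{1}(t)\neq 0$ on $[0,\pi/(2\omega))$, which is the isochronicity statement.

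\emph{Step 2 (staying at zero).} This is the main obstacle, because the vector field is not Lipschitz at $z_{1}=0$, so uniqueness cannot be invoked and solutions might a priori leave the origin. I will use monotonicity instead. Since $\dot z_{2}=\omega\vert z_{1}\vert^{\alpha}\geq 0$, $z_{2}$ is nondecreasing. At $t^{*}$ we have $z_{2}(t^{*})=r\sin(\pi/2)=r>0$, hence $z_{2}(t)\geq r>0$ for all $t\geq t^{*}$.

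Suppose $z_{1}(\tau)\neq 0$ for some $\tau>t^{*}$. Let $s=\sup\{t\in[t^{*},\tau]: z_{1}(t)=0\}$, so that $z_{1}\neq 0$ on $(s,\tau]$ and $w(s)=0$. On $(s,\tau]$ Step 1's computation gives $\dot w=-\omega z_{2}\leq-\omega r<0$. Integrating (using that $w$ is continuous at $s$) yields $w(\tau)<0$, which contradicts $w\geq 0$. Hence $z_{1}(t)=0$ for all $t\geq t^{*}$. Combined with Step 1, this gives $z_{1}(t)=0$ for all $t\geq \pi/\omega$, and for all $t\geq\pi/(2\omega)$ when $z_{2}(0)=0$.

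\emph{Remaining case.} If $z_{1}(0)=0$ and $z_{2}(0)\geq 0$, the same monotonicity argument applies from time $0$, with $z_{2}(t)\geq z_{2}(0)\geq 0$. When $z_{2}(0)>0$ the contradiction is strict as above. When $z_{2}(0)=0$, on any interval $(s,\tau]$ where $z_{1}\neq 0$ we have $\dot w=-\omega z_{2}$ and $\dot z_{2}=\omega w$ with $w(s)=z_{2}(s)=0$, and the linear system then forces $w\equiv 0$ there, again a contradiction.

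The case $z_{1}(0)=0$, $z_{2}(0)<0$ is delicate. For $s\geq 0$ and $\rho=-z_{2}(0)>0$, the function
\[
z_{1}(t)=\bigl(\rho\sin(\omega(t-s))\bigr)^{1/\alpha}\quad\text{for } t\in[s,s+\pi/\omega]
\]
(with $z_{1}=0$ elsewhere, and $z_{2}(t)=-\rho\cos(\omega(t-s))$ on the same interval) also solves \eqref{core}. This solution leaves the origin at an arbitrary time $s$ and returns only at $s+\pi/\omega$. So the first claim of the lemma should be read for $z_{1}(0)\neq 0$ (or $z_{2}(0)\geq 0$), or for the solution that stays at the origin. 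I would state this restriction explicitly in the written proof.
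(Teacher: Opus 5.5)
Your proof is correct, and its first half matches the paper's. On intervals where $z_1\neq 0$, the pair $(\vert z_1\vert^{\alpha},z_2)$ obeys the harmonic oscillator, so $z_1$ first vanishes at $(\pi/2-\phi)/\omega<\pi/\omega$, and exactly at $\pi/(2\omega)$ when $z_2(0)=0$. The paper phrases this as a contradiction on $[0,\pi/\omega]$ plus a continuity argument on $z_1(0)\vert\cos(\omega t)\vert^{1/\alpha}$, which is the same content.

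You depart from the paper in showing that $z_1$ then stays at zero. The paper only observes that every $(0,c)$ is an equilibrium. Since the vector field is not Lipschitz at $z_1=0$, that shows the constant solution exists, not that every solution remains there. Your argument closes this gap: $z_2$ is nondecreasing and $w=\vert z_1\vert^{\alpha}\geq 0$, so $w$ cannot grow again from $0$ while $z_2\geq 0$.

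Your explicit solution leaving $(0,-\rho)$ at an arbitrary time $s$ is also valid; it is $C^1$ at $s$ and $s+\pi/\omega$ because $1/\alpha>1$. So the first claim of the lemma does fail, in the all-solutions sense the paper adopts, when $z_1(0)=0$ and $z_2(0)<0$. Your restriction to $z_1(0)\neq 0$ or $z_2(0)\geq 0$ is the right one. It mirrors the hypothesis $\varepsilon_0\geq 0$ in Lemma 2, and the second claim, which is the part Theorem 2 uses, is unaffected.

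One small repair is needed. In the case $z_1(0)=z_2(0)=0$ you assert $z_2(s)=0$, but you only know $z_2(s)\geq 0$. No case split is needed anywhere. Whenever $z_1(s)=0$ and $z_2\geq 0$ on $(s,\tau]$, the inequality $\dot w=-\omega z_2\leq 0$ together with $w(s)=0$ gives $w(\tau)\leq 0$, contradicting $z_1(\tau)\neq 0$. For the same reason, the strict bound $z_2\geq r>0$ in Step 2 is more than you need.
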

\begin{proof}
We will prove it by contradiction. Assume that $z_{1}(t)\neq 0,\forall t\in [0,\pi/\omega]$. Define $\eta(t)=\vert z_{1}(t)\vert^{\alpha}$. It follows that
\begin{align}
\left\{
\begin{aligned}
\dot{\eta}(t)= & -\omega z_{2}(t),\\
\dot{z}_{2}(t)= & \omega \eta(t),
\end{aligned}
\right. \label{eta_lin}
\end{align}
for all $t\in[0,\pi/\omega]$. Note that system (\ref{eta_lin}) has the same form as system (\ref{shm_state}). Thus,
\begin{equation}
\eta(t)=r\cos(\omega t+\phi),\, z_{2}(t)=r\sin(\omega t+\phi),\, t\in[0,\pi/\omega], \label{lin_so}
\end{equation}
where $r=\sqrt{\eta^2(0)+z_{2}^2(0)}$ and $\phi=\mathrm{atan2}(z_{2}(0),\eta(0))$. There must exist a $t_{1}\in [0,\pi/\omega]$, such that $\omega t_1+\phi=\pi/2+k\pi$, where $k$ is an integer, which implies that $z_{1}(t_1)=\eta(t_1)=0$. This contradicts the assumption. Thus there exists $t_{\ast}\in [0,\pi/\omega]$ such that $z_{1}(t_{\ast})=0$. Note that for any constant $c$,  $(z_{1}(t),z_{2}(t))=(0,c)$ is an equilibrium of (\ref{core}). Then $z_{1}(t_{\ast})=0$ implies $z_{1}(t)=0,\forall t\geq t_{\ast}$, which further implies $z_{1}(t)=0,\forall t\geq \pi/\omega$.

Next, consider the case $z_{1}(0)\neq 0$ and $z_{2}(0)=0$. From the above, it is known that there exists $t_{\ast}\in [0,\pi/\omega]$ such that $z_{1}(t)=0,\forall t\geq t_{\ast}$ and $z_{1}(t)\neq 0,\forall t\in [0,t_{\ast})$. Thus, according to (\ref{eta_lin}) and (\ref{lin_so}), we have
\begin{equation*}
z_{1}(t)= \left\{
\begin{array}
[c]{ll}
 z_{1}(0)\vert \cos(\omega t)\vert^{1/\alpha}, & t\in [0,t_{\ast}),\\
0, & t\in [t_{\ast},\infty).
\end{array}
\right.
\end{equation*}
Since $z_{1}(t)$ is continuous, we know that $t_{\ast}=\pi/(2\omega)$. The proof is finished.
\end{proof}

\begin{remark}\label{rem2}
A feature of system (\ref{core}) is that, if $z_{2}(0)=0$, the settling time of the subsystem $\dot{z}_{1}(t)=-({\omega}/{\alpha}) z_{2}(t) \mathrm{sig}^{1-\alpha} (z_{1}(t))$ is exactly $\pi/(2\omega)$ for any non-zero initial condition $z_{1}(0)$.
\end{remark}

\begin{remark}
From the proof of Lemma \ref{lem1}, it can be seen that system (\ref{core}) is in fact obtained from the SHM system (\ref{shm_state}) via a nonlinear transformation. The main advantage of system (\ref{core}) is that $z_{1}(t)=0$ implies $\dot{z}_{1}(t)=0$, which means that once the state $z_{1}(t)$ reaches $z_{1}(t)=0$, it will stay there rather than move away.
\end{remark}

\subsection{Lyapunov-based analysis}
Followed by Lemma \ref{lem1}, Lemma \ref{lem2} is provided as follows for Lyapunov-based analysis.
\begin{lemma}\label{lem2}
Consider the nonlinear system (\ref{nonlinear_system}). Assume that there exist a positive definite, radially unbounded and continuously differentiable
function $V:\mathbf{R}^{n}\rightarrow [0,\infty)$, constants $\omega$ and $\alpha\in(0,1)$ such that
\begin{align}
\dot{V}(x(t))|_{(\ref{nonlinear_system})}\leq & -\frac{\omega}{\alpha}  \varepsilon(t) V^{1-\alpha}(x(t)),\quad V(x(0))=V_{0}\label{dV_lem}\\
\dot{\varepsilon}(t) = & \omega  V^{\alpha}(x(t)),\quad \varepsilon(0)=\varepsilon_{0}\geq 0,\nonumber
\end{align}
then system (\ref{nonlinear_system}) is FxTS with the settling time $\pi/(2\omega)$. In addition, if
\begin{align}
\dot{V}(x(t))|_{(\ref{nonlinear_system})}= & -\frac{\omega}{\alpha}  \varepsilon(t) V^{1-\alpha}(x(t)),\quad V(x(0))=V_{0}\label{dV_lem_2}\\
\dot{\varepsilon}(t) = & \omega  V^{\alpha}(x(t)),\quad \varepsilon(0)=0,\nonumber
\end{align}
then system (\ref{nonlinear_system}) is EFxTS with the settling time $\pi/(2\omega)$.
\end{lemma}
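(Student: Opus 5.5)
The plan is to reduce the Lyapunov inequality to a planar SHM-like system, exactly as in the proof of Lemma \ref{lem1}, and then to control the phase angle rather than attempt a componentwise comparison. The latter is not available because the linear system (\ref{shm_state}) is not cooperative.

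\textbf{Step 1: stability and invariance of $V=0$.} Since $\dot{\varepsilon}(t)=\omega V^{\alpha}(x(t))\geq 0$ and $\varepsilon_0\geq 0$, we have $\varepsilon(t)\geq 0$ for all $t$. Then (\ref{dV_lem}) gives $\dot{V}\leq 0$, so $V$ is nonincreasing. Together with positive definiteness and radial unboundedness of $V$, this yields global (Lyapunov) stability of the origin. It also shows that once $V(x(t_\ast))=0$, we have $V\equiv 0$, hence $x\equiv 0$, for all $t\geq t_\ast$. If $V_0=0$ there is nothing to prove, so assume $V_0>0$.

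\textbf{Step 2: change of variables on an interval where $V>0$.} Let $t_\ast$ be the first time with $V=0$, with $t_\ast=\infty$ if $V$ never vanishes. On $[0,t_\ast)$ set $\eta(t)=V^{\alpha}(x(t))>0$. This function is differentiable there, which sidesteps the lack of differentiability of $V^{\alpha}$ at $0$. From (\ref{dV_lem}) we get
\[
\dot{\eta}=\alpha V^{\alpha-1}\dot{V}\leq -\omega\varepsilon,\qquad \dot{\varepsilon}=\omega\eta .
\]
This is the inequality version of (\ref{eta_lin}). Define the polar angle $\theta(t)=\mathrm{atan2}(\varepsilon(t),\eta(t))$; since $\eta>0$ and $\varepsilon\geq 0$, it lies in $[0,\pi/2)$. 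Then
\[
\dot{\theta}=\frac{\eta\dot{\varepsilon}-\varepsilon\dot{\eta}}{\eta^{2}+\varepsilon^{2}}\geq\frac{\omega\eta^{2}+\omega\varepsilon^{2}}{\eta^{2}+\varepsilon^{2}}=\omega .
\]
The key sign fact used here is that $-\varepsilon\dot{\eta}\geq\omega\varepsilon^{2}$ requires $\varepsilon\geq 0$, which Step 1 supplies.

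\textbf{Step 3: settling time.} From Step 2, $\theta(t)\geq\theta(0)+\omega t\geq\omega t$ on $[0,t_\ast)$. Because $\theta<\pi/2$ there, it follows that $t_\ast\leq\pi/(2\omega)$. Combined with Step 1, this gives $x(t)=0$ for all $t\geq\pi/(2\omega)$, i.e.\ FxTS with settling time $\pi/(2\omega)$.

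\textbf{Step 4: the equality case.} Under (\ref{dV_lem_2}) with $\varepsilon_0=0$, all the inequalities in Step 2 become equalities. Then $\dot{\theta}=\omega$ with $\theta(0)=0$, so $\theta(t)=\omega t$; equivalently, by (\ref{lin_so}), $\eta(t)=\eta(0)\cos(\omega t)$ on $[0,t_\ast)$. Hence $\eta(t)>0$ for all $t<\pi/(2\omega)$, which gives $t_\ast\geq\pi/(2\omega)$. Together with Step 3 this yields $t_\ast=\pi/(2\omega)$ for every $x_0\neq 0$, which is EFxTS.

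\textbf{Main obstacle.} The difficulty lies in Steps 2--3: obtaining an upper bound on the hitting time from a differential inequality for a two-dimensional oscillator. I expect the phase-angle monotonicity argument to handle this cleanly. The only delicate points are that $\theta$ is well defined, which needs $r>0$ and holds because $\eta>0$, and that $\varepsilon\geq 0$ holds throughout.
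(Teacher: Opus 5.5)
Your proposal is correct and follows essentially the same route as the paper. Both pass to $W=V^{\alpha}$, use $\varepsilon\geq 0$ to get $\dot{\theta}\geq\omega$ for the polar angle of $(V^{\alpha},\varepsilon)$, conclude that $V^{\alpha}$ must vanish by $\pi/(2\omega)$, and then invoke invariance of $V=0$ from $\dot{V}\leq 0$. Your direct first-hitting-time formulation is slightly cleaner than the paper's contradiction on the closed interval $[0,\pi/(2\omega)]$ about where $V^{\alpha}$ is differentiable, and your Step 4 supplies the equality case that the paper omits; there you need only add that continuity of $\eta$ at $t_\ast$, together with $\eta=\eta(0)\cos(\omega t)$ on $[0,t_\ast)$, forces $t_\ast\geq\pi/(2\omega)$.
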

\begin{proof}
Let us first focus on (\ref{dV_lem}). We will prove it by contradiction. Assume that $x(t)\neq 0,\forall t\in [0,\pi/(2\omega)]$. Denote $W(x(t))=V^{\alpha}(x(t))$. Then
\begin{align}
\left\{
\begin{aligned}
\dot{W}(x(t))\leq & -\omega \varepsilon(t),\\
\dot{\varepsilon}(t)= & \omega W(x(t)),
\end{aligned}
\right. \label{lem2_eq1}
\end{align}
for all $t\in[0,\pi/(2\omega)]$.

To facilitate the analysis, system (\ref{lem2_eq1}) is transformed into polar coordinates, where the amplitude and phase angle are defined as $
\rho(t)=\sqrt{W^2(x(t))+\varepsilon^2(t)}$ and $ \theta(t)=\mathrm{atan}2(\varepsilon(t),W(x(t)))$
respectively. Then we have
\[
W(x(t))=\rho(t)\cos(\theta(t)),\quad \varepsilon(t)=\rho(t)\sin(\theta(t)).
\]
By taking the time derivatives of $\rho$ and $\theta$, we know that
\begin{align*}
\dot{\rho}(t)= & \frac{W\dot{W}+\varepsilon\dot{\varepsilon}}{\sqrt{W^2 + \varepsilon^2}}\leq\frac{W(-\omega \varepsilon)+\varepsilon (\omega W)}{\sqrt{W^2 + \varepsilon^2}}=0,\\
\dot{\theta}(t) = & \frac{\dot{\varepsilon}W - \varepsilon\dot{W}}{W^2 + \varepsilon^2} \geq \frac{(\omega W)W - \varepsilon(-\omega \varepsilon)}{W^2 + \varepsilon^2} = \omega.
\end{align*}
It follows that
\begin{equation}
\theta\left(\frac{\pi}{2\omega}\right)-\theta(0)=\int_{0}^{\pi/(2\omega)}\dot{\theta}(s)\mathrm{d}s\geq \frac{\pi}{2}. \label{eq_0126_1}
\end{equation}
Notice that $\varepsilon_{0}\geq 0$ and $W(0)\geq 0$, which implies $\theta(0)\in [0,\pi/2]$ and $\theta(\pi/(2\omega))\in [\pi/2,\infty)$. Furthermore, due to the continuity of $\theta$ on $[0,\pi/(2\omega)]$, there exists $t_{1}\in [0,\pi/(2\omega)]$ such that $\theta(t_1)=\pi/2$. Then, we know that $W(t_{1})=0$, which implies that $x(t_{1})=0$, therefore contradicting the assumption. Thus, there exists $t_{1}\in[0,\pi/(2\omega)]$ such that $x(t_{1})=V(x(t_{1}))=0$. Besides, it is obtained from $\dot{V}(x)\leq 0,\forall t\geq 0$ and $V(x)\geq 0$ that $V(x(t))=0,\forall t\geq t_{1}$, which implies (\ref{nonlinear_system}) is FxTS with the settling time $\pi/(2\omega)$.

Besides, if (\ref{dV_lem_2}) holds, then it follows from $V(x(t))\geq 0$ and $\dot{V}(x(t))\leq 0$ that system (\ref{nonlinear_system}) is Lyapunov stable.  Then, if $x_{0}\neq 0$, we can deduce $x(t)=0,\forall t\geq \pi/(2\omega)$ and $x(t)\neq 0,\forall t\in [0,\pi/(2\omega))$ by following the same analytical method as Lemma \ref{lem1}. For the sake of brevity, the specific proof process is omitted.
\end{proof}

\subsection{Exact Prescribed-Time Control of Scalar Systems}
A simple example of a scalar system is presented in this subsection to demonstrate the applicability of Lemma \ref{lem2} in designing prescribed-time control laws.
\begin{theorem}\label{the1}
Let $T>0$ be a prescribed number. Then the closed-loop system consisting of the scalar system
\begin{equation}
\dot{x}(t)=u(t),\quad x(0)=x_{0},\quad x(t)\in \mathbf{R}, \quad u(t)\in \mathbf{R},\label{sca_1}
\end{equation}
and the prescribed time control law
\begin{align}
\left\{
\begin{aligned}
u(t)=&-\frac{\pi}{2^{1-\alpha/2}\alpha T}\mathrm{sig}^{1-\alpha}(x(t))\varepsilon(t),\quad \alpha\in (0,1), \\
\dot{\varepsilon}(t)= & \frac{\pi}{2^{1+\alpha/2}T}\vert x(t)\vert^{\alpha},\quad \varepsilon(0)=0,
\end{aligned}
\right.\label{sca_u}
\end{align}
is EFxTS with the settling time $T$.
\end{theorem}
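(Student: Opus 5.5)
We apply the second part of Lemma \ref{lem2} with the Lyapunov function $V(x)=x^{2}/2$ and the frequency $\omega=\pi/(2T)$, so that the settling time $\pi/(2\omega)$ equals $T$. The function $V$ is positive definite, radially unbounded and continuously differentiable. It therefore suffices to show that along the closed loop (\ref{sca_1})--(\ref{sca_u}) the pair $(V,\varepsilon)$ satisfies (\ref{dV_lem_2}) exactly, with the same $\alpha$ as in (\ref{sca_u}) and with $\varepsilon(0)=0$.

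\textbf{Step 1 (the $\varepsilon$-equation).} Since $V^{\alpha}=2^{-\alpha}|x|^{2\alpha}$ does not match the $|x|^{\alpha}$ appearing in (\ref{sca_u}), we should use the Lemma with exponent $\alpha/2$ in place of $\alpha$. Then $V^{\alpha/2}=2^{-\alpha/2}|x|^{\alpha}$, and
\[
\omega V^{\alpha/2}=\frac{\pi}{2T}2^{-\alpha/2}|x|^{\alpha}=\frac{\pi}{2^{1+\alpha/2}T}|x|^{\alpha},
\]
which is exactly $\dot\varepsilon$ in (\ref{sca_u}). Note also that $\alpha/2\in(0,1)$, so the hypothesis of Lemma \ref{lem2} on the exponent holds.

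\textbf{Step 2 (the $V$-equation).} Compute
\[
\dot V=x\dot x=-\frac{\pi}{2^{1-\alpha/2}\alpha T}|x|^{2-\alpha}\varepsilon .
\]
The Lemma with exponent $\alpha/2$ requires
\[
-\frac{\omega}{\alpha/2}\varepsilon V^{1-\alpha/2}=-\frac{\pi}{\alpha T}\,2^{-(1-\alpha/2)}|x|^{2-\alpha}\varepsilon=-\frac{\pi}{2^{1-\alpha/2}\alpha T}|x|^{2-\alpha}\varepsilon,
\]
and this agrees with $\dot V$. So (\ref{dV_lem_2}) holds with equality, $\varepsilon(0)=0$, and the second assertion of Lemma \ref{lem2} yields EFxTS with settling time $\pi/(2\omega)=T$.

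\textbf{Main obstacle.} The main subtlety is the bookkeeping of the exponent: one must recognize that the Lemma's $\alpha$ corresponds to $\alpha/2$ here, and the constants $2^{\pm\alpha/2}$ must be checked carefully. A secondary point is well-posedness. The right-hand side is continuous, since $\mathrm{sig}^{1-\alpha}$ and $|x|^{\alpha}$ are continuous, so solutions exist. They may be non-unique only at $x=0$, where every solution stays at $0$ because $\dot x=0$ whenever $x=0$. Hence the exact identity holds along every solution, and the argument of Lemma \ref{lem2} applies. If one prefers not to invoke the omitted part of that proof, one can redo it directly: set $\eta=2^{-\alpha/2}|x|^{\alpha}$, which gives $\dot\eta=-\omega\varepsilon$ and $\dot\varepsilon=\omega\eta$ while $x\neq0$. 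Hence $\eta=\eta(0)\cos(\omega t)$, and then, exactly as in Lemma \ref{lem1},
\[
x(t)=x_0|\cos(\omega t)|^{1/\alpha}
\]
up to the first zero $t=T$, after which $x\equiv 0$.
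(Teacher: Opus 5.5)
Your proposal is correct and follows the paper's proof. The paper also takes $V=x^2/2$, rewrites the closed loop in the form (\ref{dV_lem_2}) with exponent $\beta=\alpha/2$ and $\omega=\pi/(2T)$, and invokes the equality case of Lemma \ref{lem2}; it additionally proves boundedness of $\varepsilon$ and $u$, which the statement does not require, while your explicit solution $x(t)=x_0|\cos(\omega t)|^{1/\alpha}$ supplies the step the paper omits.

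One small fix is needed in your well-posedness remark. The fact that $\dot x=0$ whenever $x=0$ does not by itself stop solutions from leaving the origin. The correct reason is that $\varepsilon\ge 0$ (since $\dot\varepsilon\ge 0$ and $\varepsilon(0)=0$), so $\dot V=-\frac{\pi}{2^{1-\alpha/2}\alpha T}\varepsilon|x|^{2-\alpha}\le 0$ along every solution.
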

\begin{proof}
Denote $V(x(t))=x^{2}(t)/2$, whose time-derivative can be calculated as
\begin{align}
\dot{V}(x(t))= & -\frac{\pi}{2^{1-\alpha/2}\alpha T}\vert x(t)\vert^{2-\alpha}\varepsilon(t)\nonumber\\
= & -\frac{\pi}{\alpha T}V^{1-\alpha/2}(x(t))\varepsilon(t)\nonumber\\
\triangleq & -\frac{\pi}{2\beta T}V^{1-\beta}(x(t))\varepsilon(t),\label{0306_1}
\end{align}
and
\[
\dot{\varepsilon}(t)=\frac{\pi}{2^{1+\alpha/2}T}\vert x(t)\vert^{\alpha}=\frac{\pi}{2T}V^{\alpha/2}(x(t))\triangleq \frac{\pi}{2T}V^{\beta}(x(t)).
\]
According to Lemma \ref{lem2}, we know that the closed-loop system consisting of (\ref{sca_1}) and (\ref{sca_u}) is EFxTS with the settling time $T$.

Next, we prove the boundedness of $\varepsilon(t)$ and $u(t)$. It is obtained from $\varepsilon(0)=0$ and $\dot{\varepsilon}(t)\geq 0$ that $\varepsilon(t)\geq 0,\forall t\in [0,\infty)$. Therefore, combining (\ref{0306_1}), we know that $\dot{V}(x(t))\leq 0$, which implies $V(x(t))\leq V_{0}$ and $x(t)\leq x(0)$. It follows from $V(x(t))=0,\forall t\geq T$ that
\[
\vert \varepsilon(t)\vert\leq \int_{0}^{T}\frac{\pi}{2T}V^{\beta}(x(s))\mathrm{d}s\leq \frac{\pi}{2}V^{\beta}_{0},\quad \forall t\in [0,\infty).
\]
Therefore, $\varepsilon(t)$ is bounded for all $t\in [0,\infty)$. Given the boundedness of both $x(t)$ and $\varepsilon(t)$, we know that $u(t)$ is bounded for all $t\in [0,\infty)$. Furthermore, since $x(t)=0,\forall t\geq T$, we have $u(t)=0,\forall t\geq T$. The proof is finished.
\end{proof}

\begin{remark}
In contrast to prescribed-time control approaches based on THF \cite{Song17auto}, the proposed control law (\ref{sca_u}) achieves precise prescribed-time control without exhibiting any infinite gains, thereby avoiding singularity problems.
\end{remark}

\begin{remark}
Compared with prescribed-time control approaches based on PDF \cite{Ding23tac}, the control law in (\ref{sca_u}) does not rely on any complicated time-varying functions, nor does it require any additional offline computations.
\end{remark}

For simulation, the prescribed time and the controller parameter are chosen as $T=1\mathrm{s}$ and $\alpha=0.5$,  respectively.  Fig. \ref{sca_V} shows the state $x(t)$ and the control input $u(t)$ for the
closed-loop system consisting of (\ref{sca_1}) and (\ref{sca_u}).

\begin{figure}[h]
\centering
\includegraphics[scale=0.9]{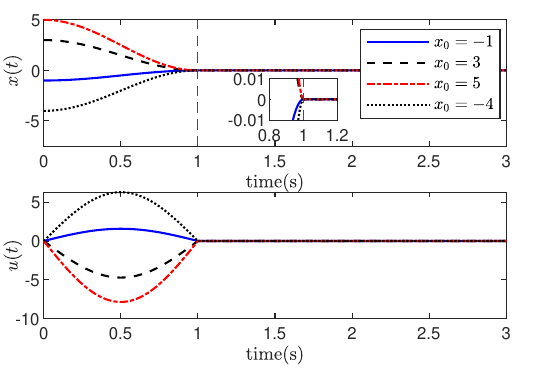}\caption{The state $x(t)$ and the control input $u(t)$ for (\ref{sca_1}) and (\ref{sca_u}).}%
\label{sca_V}%
\end{figure}

\section{Exact Prescribed Time Nonsingular Sliding Mode Control}\label{sec_sys}
\subsection{Design of Nonsingular Sliding Mode Control Law}

Consider the following nonlinear system
\begin{align}
\left\{
\begin{aligned}
\dot{x}_{1}(t)= & x_{2}(t),\\
\dot{x}_{2}(t)= &f(t,x(t))+g(t,x(t))u(t)+d(t,x(t)),
\end{aligned}
\right. \label{int2}
\end{align}
where $x(t)=[x_{1}(t),x_{2}(t)]^{\mathrm{T}}\in \mathbf{R}^{2}$ is the state, $x(0)=x_{0}$ is the initial value, $f(t,x(t))$ and $g(t,x(t))\neq 0$ are two known continuous nonlinear functions, and $d(t,x(t))$ represents the disturbance satisfying $\vert d(t,x(t))\vert\leq \bar{d}$ where $\bar{d}\geq 0$ is a known constant.

According to Lemma \ref{lem1}, a sliding mode variable for the system (\ref{int2}) can be designed as
\begin{align}
s(t)=x_2(t)+\frac{\pi}{2\alpha T_{\mathrm{s}}}\varepsilon_1 (t) \mathrm{sig}^{1-\alpha}(x_1(t)),\label{s}
\end{align}
where $\alpha \in (0,0.5)$ and $T_{\mathrm{s}}\in (0,\infty)$ are two constants,
\[
\dot{\varepsilon}_{1}(t)=\left\{
\begin{array}
[c]{ll}
0, & t\in [0,T_{\mathrm{c}}],\\
\frac{\pi}{2T_{s}}\vert x_{1}(t)\vert^{\alpha}, & t\in (T_{\mathrm{c}},\infty),
\end{array}
\right.
\]
and $\varepsilon_{1}(0)=0$.

Based on (\ref{s}), a nonsingular sliding mode control  law can be designed as
\begin{equation}
u(t)=\left\{
\begin{array}
[c]{ll}
g^{-1}(t,x(t))(u_{\mathrm{c}}(t)-f(t,x(t))), & t\in [0,T_{\mathrm{c}}],\\
g^{-1}(t,x(t))(u_{\mathrm{s}}(t)-f(t,x(t))), & t\in (T_{\mathrm{c}},\infty),
\end{array}
\right. \label{slide_u}
\end{equation}
where $T_{\mathrm{c}}\in (0,\infty)$ and
\begin{align*}
u_{\mathrm{c}}(t)= & -\frac{\pi}{2^{1-\alpha/2}\alpha T_{\mathrm{c}}}\mathrm{sig}^{1-\alpha}(x_{2}(t))\varepsilon_{2}(t)-\bar{d}\mathrm{sign}(x_{2}(t)),\\
u_{\mathrm{s}}(t)= & -\frac{\pi^2}{4\alpha T_{\mathrm{s}}^2} x_1(t) + \frac{\pi^2(1-\alpha)}{4\alpha^2 T_{\mathrm{s}}^2} \varepsilon_{1}^2(t) \mathrm{sig}^{1-2\alpha}(x_1(t))\\
&-\frac{\pi (1-\alpha)}{2\alpha T_{\mathrm{s}}}\varepsilon_{1}(t)\mathrm{sig}^{1-\alpha}(s(t))-\bar{d}\mathrm{sign}(s(t))\\
&-\frac{1+\alpha}{2}\vert x_2(t)\vert \mathrm{sign}(s(t)),
\end{align*}
in which
\[
\dot{\varepsilon}_{2}(t)= \frac{\pi}{2^{1+\alpha/2}T_{\mathrm{c}}}\vert x_{2}(t)\vert^{\alpha},\quad \varepsilon_{2}(0)=0,\quad t\in [0,\infty).
\]
\begin{theorem}\label{the2}
Let $T>0$ be a prescribed number, $T_{\mathrm{c}}\in (0,T)$ and $T_{\mathrm{s}}=T-T_{\mathrm{c}}$. The state of closed-loop system consisting of the system (\ref{int2}) and control law (\ref{slide_u}) satisfies $x(t)=0, \forall t\geq T$.
\end{theorem}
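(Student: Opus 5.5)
The proof will follow the two phases of the control law (\ref{slide_u}). On $[0,T_{\mathrm c}]$ the controller drives $x_2$ to zero. Since $\varepsilon_1$ is frozen at $0$ there, this places the state on the sliding surface $s=0$ at $t=T_{\mathrm c}$. On $(T_{\mathrm c},\infty)$ I will show that $s$ stays at zero, so the $x_1$-dynamics reduce exactly to the SHM-like system (\ref{core}). Lemma \ref{lem1} then gives $x_1=0$ after an additional time $T_{\mathrm s}$, that is, by $T_{\mathrm c}+T_{\mathrm s}=T$.

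\emph{Phase 1.} For $t\in[0,T_{\mathrm c}]$ the closed loop gives $\dot x_2=u_{\mathrm c}+d$. I take $V=x_2^2/2$ and use $x_2d\le\bar d|x_2|$ to cancel the disturbance against $-\bar d\,\mathrm{sign}(x_2)$. This leaves
\[
\dot V\le -\frac{\pi}{2^{1-\alpha/2}\alpha T_{\mathrm c}}|x_2|^{2-\alpha}\varepsilon_2 ,
\]
which holds in the Filippov sense because the discontinuity sits only at $x_2=0$. Rewriting this in terms of $V$ exactly as in the proof of Theorem \ref{the1} (with $\beta=\alpha/2$) puts it in the form (\ref{dV_lem}) with $\omega=\pi/(2T_{\mathrm c})$ and $\varepsilon_2(0)=0$. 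The first part of Lemma \ref{lem2} then gives $x_2(t)=0$ for $t\ge t_1$, for some $t_1\le T_{\mathrm c}$. In particular $x_2(T_{\mathrm c})=0$. Since $\dot\varepsilon_1=0$ on this interval, $\varepsilon_1(T_{\mathrm c})=0$, and hence $s(T_{\mathrm c})=0$. I also note that $x_1$ is constant after $t_1$ and bounded on $[0,T_{\mathrm c}]$.

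\emph{Phase 2, invariance of $s=0$.} This is the step I expect to be the main obstacle. Set $k=\pi/(2\alpha T_{\mathrm s})$. Differentiating (\ref{s}) with the $\dot\varepsilon_1$ law gives
\[
\dot s=\dot x_2+k\frac{\pi}{2T_{\mathrm s}}x_1+k(1-\alpha)\varepsilon_1|x_1|^{-\alpha}x_2 .
\]
The second term is cancelled exactly by the first term of $u_{\mathrm s}$. For the third term I substitute $x_2=s-k\varepsilon_1\mathrm{sig}^{1-\alpha}(x_1)$, which produces $-k^2(1-\alpha)\varepsilon_1^2\mathrm{sig}^{1-2\alpha}(x_1)$. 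This is cancelled by the second term of $u_{\mathrm s}$, and it is nonsingular because $\alpha<0.5$. What remains is the cross term $k(1-\alpha)\varepsilon_1|x_1|^{-\alpha}s$, which is singular as $x_1\to0$. Computing $s\dot s$, this cross term contributes $k(1-\alpha)\varepsilon_1|x_1|^{-\alpha}s^2\ge 0$. I would try to dominate it using the terms $-k(1-\alpha)\varepsilon_1|s|^{2-\alpha}$, $-\bar d|s|$ and $-\tfrac{1+\alpha}{2}|x_2||s|$ coming from $u_{\mathrm s}$. The point I rely on is that only an arbitrarily small neighbourhood of $s=0$ matters, because the trajectory starts on the surface. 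On the set where $|s|\ll|x_1|$ the cross term is of higher order in $s$ than the sign-type terms, so the condition $s\dot s\le0$ near $s=0$ shows that $s=0$ is invariant in the Filippov sense. The delicate sub-case is when $x_1$ and $s$ are both small. There I would use the $|x_2|\,\mathrm{sign}(s)$ term together with the relation between $x_2$ and $s$ to absorb the cross term. I expect this is the place where the coefficient $(1+\alpha)/2$ and the restriction $\alpha<0.5$ are needed.

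\emph{Phase 2, convergence.} With $s\equiv0$ on $[T_{\mathrm c},\infty)$, the $x_1$-dynamics become $\dot x_1=-k\,\varepsilon_1\mathrm{sig}^{1-\alpha}(x_1)$ and $\dot\varepsilon_1=\frac{\pi}{2T_{\mathrm s}}|x_1|^\alpha$, with $\varepsilon_1(T_{\mathrm c})=0$. After the time shift $t\mapsto t-T_{\mathrm c}$, this is system (\ref{core}) with $z_1=x_1$, $z_2=\varepsilon_1$ and $\omega=\pi/(2T_{\mathrm s})$, started from $z_2(0)=0$. Lemma \ref{lem1} then gives $x_1(t)=0$ for all $t\ge T_{\mathrm c}+\pi/(2\omega)=T$. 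Finally, $s=0$ together with $x_1=0$ gives $x_2=-k\varepsilon_1\mathrm{sig}^{1-\alpha}(x_1)=0$, so $x(t)=0$ for all $t\ge T$.
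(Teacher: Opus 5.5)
Your Phase 1 and your final reduction to (\ref{core}) match the paper. The invariance step of Phase 2, however, has a genuine gap, and the mechanism you propose for the ``delicate sub-case'' cannot close it. Your own computation gives, for $x_1\neq0$,
\[
s\dot s=k(1-\alpha)\varepsilon_1\vert s\vert^{2-\alpha}\left(\frac{\vert s\vert^{\alpha}}{\vert x_1\vert^{\alpha}}-1\right)+\left(d\,s-\bar d\vert s\vert\right)-\frac{1+\alpha}{2}\vert x_2\vert\vert s\vert .
\]
On the cone $0<\vert s\vert\le\vert x_1\vert$ this is $\le0$ at once, and the $\vert x_2\vert$ term is not needed. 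Off the cone the sign condition fails arbitrarily close to the origin whenever $\varepsilon_1>0$. For example, take $\vert x_1\vert=\vert s\vert^{N}$ with $N>1/\alpha$. Then the first term is of order $\vert s\vert^{2-N\alpha}$, which dominates $2\bar d\vert s\vert+\frac{1+\alpha}{2}\vert x_2\vert\vert s\vert=O(\vert s\vert)$ as $s\to0$. The cross term is unbounded there, while every damping term in $u_{\mathrm s}$ has a bounded gain, so the term $\frac{1+\alpha}{2}\vert x_2\vert\mathrm{sign}(s)$ cannot absorb it. Its actual role in the paper is to dominate, on the cone, the term $(1+\alpha)\mathrm{sig}^{\alpha}(x_1)x_2s^2$ produced by differentiating the weight $\vert x_1\vert^{1+\alpha}$ in $W$. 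The restriction $\alpha<0.5$ only makes $\mathrm{sig}^{1-2\alpha}$ continuous. Consequently ``$s\equiv0$ on $[T_{\mathrm c},\infty)$'' does not follow from a sign condition on $s\dot s$. Your last step, which applies Lemma \ref{lem1} on all of $[T_{\mathrm c},\infty)$, is therefore unsupported wherever $x_1=0$ is involved: for $t\ge T$, and from the start if $x_1(T_{\mathrm c})=0$.

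The missing idea is that you never have to deal with the singular set before time $T$. The paper uses $W=\vert x_1\vert^{1+\alpha}s^2$ to show that $s$ stays at $0$ as long as the solution remains in $\{\vert s\vert\le\vert x_1\vert\}$; your cone estimate would do equally well away from $x_1=0$. To leave this set there must be a first time $t_2$ with $\vert s(t_2)\vert=\vert x_1(t_2)\vert$ and $s(t_2)=0$, so $x_1(t_2)=0$. Let $t_3\le t_2$ be the first zero of $x_1$. Since $s\equiv0$ on $[T_{\mathrm c},t_3)$, the pair $(x_1,\varepsilon_1)$ obeys (\ref{core}) there, with $\omega=\pi/(2T_{\mathrm s})$ and $z_2(0)=\varepsilon_1(T_{\mathrm c})=0$. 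The exactness part of Lemma \ref{lem1} then forces $t_3\ge T$ when $x_1(T_{\mathrm c})\neq0$. Hence $s\equiv0$ on $[T_{\mathrm c},T]$ and $x(T)=0$. The paper handles the case $x_1(T_{\mathrm c})=0$ and the behaviour for $t\ge T$ separately, by observing that the origin is an equilibrium of the closed loop, not by invariance of $s=0$. To repair your argument, restrict the invariance claim to the maximal interval on which $x_1\neq0$; a continuation argument with the cone estimate suffices there. Then use Lemma \ref{lem1} to show that this interval contains $[T_{\mathrm c},T)$, and add the separate argument at the origin.
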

\begin{proof}
For $t\in [0,T_{\mathrm{c}}]$, it holds that
\begin{align}
\dot{x}_{2}(t)= & -\frac{\pi}{2^{1-\alpha/2}\alpha T_{\mathrm{c}}}\mathrm{sig}^{1-\alpha}(x_{2}(t))\varepsilon_{2}(t)\nonumber\\
&+d(t,x(t)) -\bar{d}\mathrm{sign}(x_{2}(t)).\label{0129_1}
\end{align}
Denote $V(x_{2}(t))=x_{2}^{2}(t)/2$, whose time-derivative can be calculated as
\begin{align*}
\dot{V}(x_{2}(t))= & -\frac{\pi}{2^{1-\alpha/2}\alpha T_{\mathrm{c}}} \vert x_{2}(t)\vert^{2-\alpha} \varepsilon_{2}(t)-\bar{d}\vert x_{2}(t)\vert+d(t,x(t))x_{2}(t)\nonumber\\
\leq & -\frac{\pi}{\alpha T_{\mathrm{c}}}V^{\frac{2-\alpha}{2}}(x_{2}(t))\varepsilon_{2}(t)\nonumber\\
\triangleq & -\frac{\pi}{2\beta T_{\mathrm{c}}}V^{1-\beta}(x_{2}(t))\varepsilon_{2}(t),
\end{align*}
and
\[
\dot{\varepsilon}_{2}(t)\!=\! \frac{\pi}{2^{1+\alpha/2}T_{\mathrm{c}}}\vert x_{2}(t)\vert^{\alpha}\!=\!\frac{\pi}{2T_{\mathrm{c}}}V^{\alpha/2}(x_{2}(t))
\!\triangleq\! \frac{\pi}{2T_{\mathrm{c}}}V^{\beta}(x_{2}(t)).
\]
According to Lemma \ref{lem2}, we know that $x_{2}(T_{\mathrm{c}})=0$.

For $t\in(T_{\mathrm{c}},\infty)$, it is obtained from $x_{2}(T_{\mathrm{c}})=0$ and $\varepsilon_{1}(T_{\mathrm{c}})=0$ that $s(T_{\mathrm{c}})=0$. Denote $
W(t)=\vert x_{1}(t)\vert^{1+\alpha}s^{2}(t)$, whose time-derivative can be calculated as
\begin{align}
\dot{W}(t)
=& \frac{\pi(1-\alpha)}{\alpha T_{\mathrm{s}}}\varepsilon_{1}(t)(\vert x_{1}(t)\vert s^2(t)-\vert x_{1}(t)\vert^{1+\alpha}\vert s(t)\vert^{2-\alpha})\nonumber\\
&-2\vert x_{1}(t)\vert^{1+\alpha}\bar{d}\vert s(t)\vert+2\vert x_{1}(t)\vert^{1+\alpha}d(t,x(t))s(t) \nonumber\\
&+(1+\alpha)\left(\mathrm{sig}^{\alpha}(x_{1}(t))x_{2}(t)s^2(t)-\vert x_{1}(t)\vert^{1+\alpha}\vert x_{2}(t)\vert \vert s(t)\vert\right)\nonumber\\
\leq & \frac{\pi(1-\alpha)}{\alpha T_{\mathrm{s}}}\varepsilon_{1}(t)(\vert x_{1}(t)\vert s^2(t)-\vert x_{1}(t)\vert^{1+\alpha}\vert s(t)\vert^{2-\alpha})\nonumber\\
&+(1+\alpha)\vert x_{2}(t)\vert(\vert x_{1}(t)\vert^{\alpha}s^2(t)-\vert x_{1}(t)\vert^{1+\alpha} \vert s(t)\vert).\label{d_Wt}
\end{align}
It can be seen that $\dot{W}(t)\leq 0$ when $\vert s(t)\vert\leq \vert x_{1}(t)\vert$.

Next, we proceed by considering two cases separately:

\textbf{The case $x_{1}(T_{\mathrm{c}})=0$: } It can be verified that $x(t)=0$ is the equilibrium of the closed-loop system. Thus $x(t)=0,\forall t\geq T_{\mathrm{c}}$ in this case.

\textbf{The case $x_{1}(T_{\mathrm{c}})\neq 0$: }
We will prove by contradiction. Suppose there exists $t_{1}\in [T_{\mathrm{c}},T]$ such that $\vert s(t_1)\vert>\vert x_1(t_1)\vert$. From $\vert s(T_{\mathrm{c}})\vert=0< \vert x_{1}(T_{\mathrm{c}}) \vert$ and the continuity of $s(t)$ and $x_{1}(t)$, it follows that there exists $t_{2}\in [T_{\mathrm{c}},t_{1})$ such that $
\vert s(t_{2})\vert=\vert x_{1}(t_{2})\vert, \vert s(t)\vert\leq\vert x_{1}(t)\vert, \forall t\in [T_{c},t_{2}]$.
According to (\ref{d_Wt}) and $W(T_{\mathrm{c}})=0$, we know that $W(t)=0,\forall t\in [T_{\mathrm{c}},t_2]$ and $s(t_{2})=x_{1}(t_{2})=0$. It follows from $x_{1}(T_{\mathrm{c}})\neq 0$ that there exists $t_{3}\in (T_{\mathrm{c}},t_{2}]$ such that $x_{1}(t)\neq 0,\forall t\in [T_{\mathrm{c}},t_{3})$ and $x_{1}(t_{3})=0$. Then, it is obtained from $W(t)=0,\forall t\in [T_{\mathrm{c}},t_{3})$ that $s(t)=0,\forall t\in [T_{\mathrm{c}},t_{3})$, namely,
\begin{align}
\left\{
\begin{aligned}
\dot{x}_{1}(t)= & x_{2}(t)=  -\frac{\pi}{2\alpha T_{\mathrm{s}}}\varepsilon_1 (t) \mathrm{sig}^{1-\alpha}(x_1(t)),\\
\dot{\varepsilon}_{1}(t)= & \frac{\pi}{2T_{s}}\vert x_{1}(t)\vert^{\alpha},\quad \varepsilon_{1}(T_{\mathrm{c}})=0,
\end{aligned}
\right. \label{s=0_0129}
\end{align}
for all $t\in [T_{\mathrm{c}},t_{3})$. We can conclude by using Lemma \ref{lem1} and $x_{1}(t)\neq 0, \forall t\in [T_{\mathrm{c}},t_{3})$ that $t_{3}\geq T$, which further implies $t_{1}>t_{3}\geq T$. It contradicts the assumption, thus $\vert s(t)\vert\leq \vert x_{1}(t)\vert,\forall t\in [T_{\mathrm{c}},T]$. By using (\ref{d_Wt}) and $W(T_{\mathrm{c}})=0$, we have
$W(t)=0,\forall t\in [T_{\mathrm{c}},T]$. It follows from $
\vert s(t)\vert^{3+\alpha}\leq W(t)=0, \forall t\in [T_{\mathrm{c}},T]$
that $s(t)=0,\forall t\in [T_{\mathrm{c}},T]$. Thus, (\ref{s=0_0129}) holds for all $t\in [T_{\mathrm{c}},T]$. According to Lemma \ref{lem1}, we know that $x_{1}(T)=x_{2}(T)=0$. It can be verified that $x(t)=0$ is the equilibrium of the closed-loop system, which implies $x(t)=0,\forall t\geq T$.

Moreover, similar to the proof of Theorem \ref{the1}, the boundedness of $u(t)$ and $\varepsilon_{i}(t),i=1,2$ and $u(t)=0,\forall t\geq T$ can be derived based on the boundedness of $x(t)$ and the fact $x(t)=0,\forall t\geq T$. For brevity, the details are omitted here. The proof is finished.
\end{proof}
\begin{remark}
Here, we briefly clarify that the settling time is independent of the initial conditions. From the proof of Theorem \ref{the2}, it follows that $x(t)=0,\forall t\geq T$  and the settling time is exactly $T$ unless $x_{1}(T_{\mathrm{c}})=0$. Besides, it is obtained from (\ref{0129_1}) and Lemma \ref{lem1} that $x_{2}(t)=x_{2}(0)\vert \cos(\omega t)\vert^{1/\alpha},\forall t\in [0,T_{\mathrm{c}}]$ when $d(t,x(t))=0$, which implies
\[
x_{1}(T_{\mathrm{c}})=x_{1}(0)+x_{2}(0)\int_{0}^{T_{\mathrm{c}}}\vert \cos(\omega s)\vert^{1/\alpha}\mathrm{d}s.
\]
Thus, unless in the extremely rare case where
\[
x_{1}(0)+x_{2}(0)\int_{0}^{T_{\mathrm{c}}}\vert \cos(\omega s)\vert^{1/\alpha}\mathrm{d}s=0,
\]
the settling time is exactly $T$. Therefore, the settling time can be regarded as almost completely independent of the initial conditions.
\end{remark}

\begin{remark}
It is worth noting that for $t\geq T_{\mathrm{c}}$, differentiating the sliding variable $s(t)$ defined by (\ref{s}) leads to the singularity problem similar to those encountered in terminal sliding mode control \cite{Feng02auto}. However, the control scheme proposed in this section circumvents the direct differentiation of $s(t)$ during both design and analysis. By employing $W(t)=\vert x_{1}(t)\vert^{1+\alpha}s^{2}(t)$ as a substitute, the singularity problem is effectively eliminated (it is apparent that the control law given by (\ref{slide_u}) is nonsingular).
\end{remark}

\begin{remark}
It should be noted that since $W(t)$ is not positive definite with respect to $s(t)$. Strictly speaking, it cannot serve as a Lyapunov function. However, as $s(t)$ has converged to zero by $t=T_{\mathrm{c}}$, it suffices to utilize $W(t)$ to demonstrate that $s(t)$ remains at zero without escaping. Thus, the role played by $W(t)$ is significantly less demanding than that of a typical Lyapunov function.
\end{remark}

\begin{remark}
Unlike existing nonsingular finite/fixed-time sliding mode methods (see, for example, \cite{Corradini18auto,Feng02auto,Moulay21tac,Rabiee19auto,Yang13auto} and \cite{Zuo15iet}), where the actual settling time typically remains dependent on the initial conditions, the method proposed in Theorem \ref{the2} yields an actual settling time that is almost independent of the initial conditions.
\end{remark}

\begin{remark}
Compared with the THF-based prescribed-time sliding mode control method \cite{Djennoune23ifac} and \cite{Shi22tcas}, the gain in control law (\ref{slide_u}) is uniformly bounded. Thus, no singularity issues occur at or after the prescribed time $T$.
\end{remark}

\begin{remark}
Compared with the PDF-based prescribed-time sliding mode control methods \cite{Ding26tac,Zhou24scis}, the method proposed in Theorem \ref{the2} does not rely on any periodic time-varying gains that require additional offline computations.
\end{remark}

\subsection{A Simulation Example}
The dynamics of the Van der Pol circuits system can be described as \cite{Rabiee19auto}
\begin{align}
\left\{
\begin{aligned}
\dot{x}_{1}(t)\!= & x_{2}(t),\\
\dot{x}_{2}(t)\!=\! & -\!2x_{1}(t)\!+\!3(1-x_{1}^{2}(t))x_{2}(t)+u(t)+d(t),
\end{aligned}
\right.\label{FDPL}
\end{align}
which can be rewritten as (\ref{int2}) with $
f(t,x(t))=-2x_{1}(t)+3(1-x_{1}^{2}(t))x_{2}(t), g(t,x(t))=1.$
For simulation, the external disturbance $d(t)$ is set to $
d(t)= 2\sin(0.2\pi t)+0.15\sin(2\pi t)$
and the initial value is set to $x(0)=[1,-1]^{\mathrm{T}}$.

\begin{figure}[h]
\centering
\includegraphics[scale=0.9]{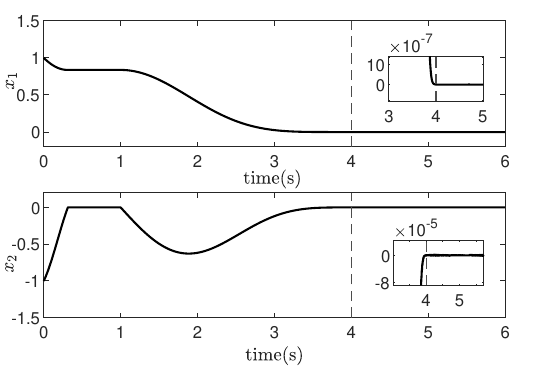}\caption{The state $x$ for system (\ref{FDPL}) and control law (\ref{slide_u}).}%
\label{smc_state}%
\end{figure}
\begin{figure}[h]
\centering
\includegraphics[scale=0.9]{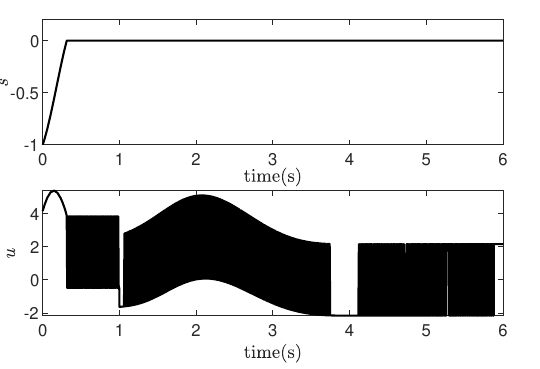}\caption{The sliding mode variable $s$ and the control input $u$
for system (\ref{FDPL}) and control law (\ref{slide_u}).}%
\label{smc}%
\end{figure}

According to Theorem \ref{the2}, the fixed-time sliding mode control law is designed as (\ref{slide_u}) with $
\alpha=0.2, \bar{d}=2.15, T=4, T_{\mathrm{c}}=1$.
Figs. \ref{smc_state} and \ref{smc} showcases the simulation results. Fig. \ref{smc} shows that the sliding mode variable can convergence to zero within $T_{\mathrm{c}}=1\mathrm{s}$. Fig. \ref{smc_state} shows that the state of the closed-loop system can convergence to zero at the prescribed time $T=4\mathrm{s}$.

\section{Conclusion}
This paper has addressed the problems of stability analysis and nonsingular sliding mode stabilization for exact prescribed-time control. By leveraging the isochronism property of simple harmonic motion, this paper developed a novel exact prescribed-time control framework that differs fundamentally from existing methods. First, a new Lyapunov analysis approach was established to characterize exact prescribed-time convergence, based on which exact prescribed-time stabilization for scalar systems was achieved. In particular, it was shown that for any nonzero initial condition, the closed-loop settling time matches the prescribed prescribed time exactly. Next, a novel exact prescribed-time nonsingular sliding mode control law was proposed. It ensures exact prescribed-time convergence for almost all initial conditions under disturbances, and the control law is well-defined throughout the entire state space with uniformly bounded gains. Simulation results demonstrated the effectiveness of the proposed methods.

\end{document}